\def\g{\widetilde{g}}
\def\F{\widetilde{F}}
\def\h{\widetilde{h}}
\def\tgamma{\widetilde{\gamma}}
\def\p{\widetilde{p}}
\def\re{\operatorname{Re}}
\def\im{\operatorname{Im}}
\def\Ima{\operatorname{Im}}
\def\N{\mathbb{N}}
\def\Z{\mathbb{Z}}
\def\R{\mathbb{R}}
\def\C{\mathbb{C}}
\def\bC{\overline{\C}}
\newtheorem*{theorema}{Theorem}
\newtheorem*{theoremf}{Fatou's Theorem}
\newtheorem*{theoremh}{Hurwitz's Theorem}
\newtheorem{lemma}{Lemma}
\theoremstyle{remark}
\newtheorem*{comment}{Comment}
\newtheorem{remark}{Remark}
\begin{document}
\title{Green's function and anti-holomorphic dynamics on a torus}
\author{Walter Bergweiler and Alexandre Eremenko\thanks{Supported by NSF grant DMS-1361836.}}
\date{}
\maketitle
\begin{abstract}
We give a new, simple proof of the fact recently discovered by
C.-S.~Lin and C.-L.~Wang that the Green function of a
torus has either three or five critical
points, depending on the modulus of the torus.
The proof uses anti-holomorphic dynamics.
As a byproduct we find a one-parametric family of anti-holomorphic dynamical
systems for which the parameter space consists
only of hyperbolic components
and analytic curves separating them.
\end{abstract}

\section{Introduction}
Green's function on a torus $T$ is defined as a solution of the equation
$$\Delta G=-\delta+\frac{1}{|T|},$$
normalized so that 
$$\int_TG=0.$$
Here $\delta$ is the delta-function, and $|T|$
is the area of $T$ with respect to a flat metric.

We write the torus $T$ as $T=\C / \Lambda$ with  
a lattice 
\[
\Lambda=\{ m\omega_1 +n\omega_2\colon m,n\in\Z\},
\]
where $\tau=\omega_2/\omega_1$ satisfies $\im\tau>0$. 
Recently C.-S.\ Lin and C.-L.\ Wang~\cite{Lin}
discovered that Green's function has either three
or five critical points, depending on $\tau$. It is surprising that this
simple fact was not known until 2010.
In \cite{Lin1,Lin2}
they study the corresponding partition of the $\tau$-half-plane.
Their proofs are 
long and indirect, using advanced non-linear PDE theory.
Our paper is motivated by the desire to give a simple proof of their result
that Green's function has either three or five critical points and to 
give a criterion for $\tau$ distinguishing which case occurs.

We have (see~\cite{Lin})
\[
G(z)=-\frac{1}{2\pi}\log|\theta_1(z)|+\frac{(\Ima z)^2}{2\Ima\tau}+C(\tau),
\]
where $\theta_1$ is the first theta-function. 
Here and in the following we use the notation of elliptic functions
as given in~\cite{Ahlfors,HC}.
We note that the notation in \cite{A,Weierstrass,WW} is different, see
the remark following the theorem below.

Critical points of $G$ are solutions of the equation
\begin{equation}\label{1}
\zeta(z)+az+b\overline{z}=0,
\end{equation}
where the constants $a$ and $b$ are uniquely defined by the condition
that the left hand side is $\Lambda$-periodic. With $\zeta(z+\omega_j)=
\zeta(z)+\eta_j$ for $j=1,2$ we thus have 
\[
\eta_1+a \omega_1+b\overline{\omega_1}=0
\quad\text{and}\quad
\eta_2+a \omega_2+b\overline{\omega_2}=0.
\]
With the Legendre relation $\eta_1\omega_2-\eta_2\omega_1=2\pi i$ we obtain
\begin{equation}\label{ab}
b=-\frac{\pi}{|\omega_1|^2 \im \tau}
\quad\text{and}\quad
a=-\frac{b\overline{\omega_1}}{\omega_1}-\frac{\eta_1}{\omega_1}
=\frac{\pi}{\omega_1^2 \im \tau}-\frac{\eta_1}{\omega_1}.
\end{equation}
So the problem is to determine the number of solutions of~\eqref{1} where $a$ and $b$ are
given by~\eqref{ab}.
\begin{theorema}
The equation~\eqref{1} has three solutions in $T$ if
$e_j\omega_1^2+\eta_1\omega_1=0$ or
\begin{equation}\label{criterion2}
\im\!\left(\frac{2\pi i}{e_j\omega_1^2+\eta_1\omega_1} - \tau\right)\geq 0
\end{equation}
for some $j\in\{1,2,3\}$ and it has  five solutions otherwise.
\end{theorema}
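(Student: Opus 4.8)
\medskip
\noindent\textbf{Proof strategy.}
The plan is to interpret the critical points of $G$ as fixed points of an anti-holomorphic self-map of $T$ and then count them with a topological index formula together with Fatou's theorem. Since $b$ is real by~\eqref{ab}, equation~\eqref{1} is equivalent to $z=\overline{g(z)}$ with $g(z)=-(\zeta(z)+az)/b$, so its solutions are precisely the fixed points of $F(z)=\overline{g(z)}$. Using~\eqref{ab} one verifies $g(z+\omega_j)=g(z)+\overline{\omega_j}$, hence $F(z+\omega_j)=F(z)+\omega_j$, so $F$ descends to an anti-holomorphic self-map of $T$ with an essential singularity at the origin created by the pole of $\zeta$. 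I would first record two symmetries of $f(z):=\zeta(z)+az+b\overline z$: it is $\Lambda$-periodic by construction, and since $\zeta$ is odd and $b$ real it is odd, $f(-z)=-f(z)$. Oddness plus periodicity forces $f$ to vanish at the three half-periods $\omega_1/2,\omega_2/2,(\omega_1+\omega_2)/2$ and makes all other zeros occur in pairs $\pm z_0$; thus the number of critical points is odd and at least three, and it remains to bound it by five and to decide when the extra pair is present.

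Next I would compute the index of each zero. Regarding $f$ as a map $\R^2\to\R^2$ one has $f_z=a-\wp(z)$ and $f_{\overline z}=b$, so $\det Df=|\wp(z)-a|^2-b^2$; equivalently the multiplier of the holomorphic map $F^2$ at a fixed point $z_0$ is $|g'(z_0)|^2=|\wp(z_0)-a|^2/b^2$. Hence a zero is repelling, of index $+1$, when $|\wp(z_0)-a|>|b|$ and attracting, of index $-1$, when $|\wp(z_0)-a|<|b|$. Because $f$ is periodic the opposite sides of a fundamental parallelogram cancel in $\oint d\arg f$, and the only singular contribution is the simple pole of $f$ at the origin, of index $-1$; by Poincar\'e--Hopf (with $\chi(T)=0$) the zeros therefore satisfy $R-A=1$, where $R,A$ count repelling and attracting fixed points. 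In particular $A\ge1$, and three critical points means indices $+1,+1,-1$ at the half-periods, while five means three repelling half-periods together with one attracting pair.

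To finish the count I would use that $F$ commutes with $z\mapsto-z$ (indeed $g(-z)=-g(z)$, so $F(-z)=-F(z)$) and that the critical points of $F$ are exactly the two solutions of $\wp(z)=a$. By Fatou's theorem each attracting fixed point attracts a critical point, so $A\le2$; moreover the basin of an attracting half-period is invariant under $z\mapsto-z$ and hence swallows both critical points, leaving none for any other attractor. Combined with $A\ge1$ this yields the dichotomy: either a single half-period is attracting (so $A=1$ and there are three critical points) or the two critical points lie in the basins of an attracting symmetric pair (so $A=2$ and there are five). The step I expect to be the main obstacle is justifying the Fatou bound in this setting, because $F$ is \emph{not} rational on $T$: the pole of $\zeta$ makes the origin an essential singularity, so the dynamics is transcendental and one must check that this singularity contributes no further attracting basins, i.e.\ that the two critical values exhaust the relevant singular values.

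Finally I would convert the dichotomy into~\eqref{criterion2}. By the above, three critical points occur exactly when some half-period $\omega_j/2$ is attracting or neutral, that is $|e_j-a|\le|b|$, and five when $|e_j-a|>|b|$ for every $j$. Multiplying by $|\omega_1|^2$ and substituting~\eqref{ab}, the boundary $|e_j-a|=|b|$ becomes $\left|(e_j\omega_1^2+\eta_1\omega_1)-\pi/\im\tau\right|=\pi/\im\tau$, so $e_j\omega_1^2+\eta_1\omega_1$ runs over the circle through $0$ of radius $\pi/\im\tau$ centred on the positive real axis. The map $w\mapsto2\pi i/w$ carries this circle to the line $\im w=\im\tau$ and its interior to $\{\im w>\im\tau\}$, whence $|e_j-a|\le|b|$ is equivalent to $\im\!\left(2\pi i/(e_j\omega_1^2+\eta_1\omega_1)-\tau\right)\ge0$. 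The degenerate value $e_j\omega_1^2+\eta_1\omega_1=0$ corresponds to $w=\infty$ and so must be listed separately, giving exactly the statement of the theorem.
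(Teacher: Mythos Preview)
Your outline is essentially the paper's proof: rewrite~\eqref{1} as a fixed-point problem for an anti-holomorphic map, use a degree/index count on the torus to get $R-A=1$, invoke Fatou's theorem to bound $A\le 2$, and use the oddness symmetry to show that an attracting half-period swallows both critical points. The algebraic conversion at the end is also the same.

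Two points deserve attention. First, the obstacle you flag is exactly the heart of the paper: one must show that the map $\tilde h=\tilde g^{2}$ on the Fatou set $\tilde F\subset T$ has \emph{no asymptotic curves}, so that Fatou's theorem really pins each attracting basin to a critical point. The paper proves this as a separate lemma by lifting an alleged asymptotic curve to $\C$ and showing it must either hit a pole (contradiction with $h(\gamma(t))\to p\in F$) or run to $\infty$ in the complement of a $\Lambda$-invariant union of disks about the singularities, where the functional equation $h(z+\omega)=h(z)+\omega$ forces $h\to\infty$ (again a contradiction). Note that the singularity at the origin is a \emph{pole} of $g$, not an essential singularity; the transcendental flavour comes from the fact that $g$ does not descend to a self-map of $T$ and one must work on the hyperbolic open set $\tilde F$.

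Second, your passage from ``some half-period is attracting'' to ``attracting or neutral'' in the final paragraph hides the boundary case. The dynamical argument only gives strict inequality; when some $\omega_j/2$ is neutral the index count breaks down because the zero need not be simple. The paper closes this gap by a perturbation: if $\phi(z)=z-g(z)$ has a zero on $\{J_\phi=0\}$, then for arbitrarily small $w$ the equation $\phi=w$ has a solution in $D^{+}$, hence exactly three solutions, and an openness lemma for the maximal preimage count then forces $\phi=0$ to have three solutions as well. You should include this step to justify equality in~\eqref{criterion2}.
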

Here, as usual, $e_1=\wp(\omega_1/2)$, 
$e_2=\wp((\omega_1+\omega_2)/2)$ and $e_3=\wp(\omega_2/2)$.
An elementary computation shows that the condition
in the theorem
is equivalent to
\[
\min_{1\leq j\leq 3}\left|\frac{e_j\omega_1^2+\eta_1\omega_1}{\pi}\im\tau-1\right|\leq 1
\]

We note that $e_j\omega_1^2$ and $\eta_1\omega_1$ depend only on $\tau=\omega_2/\omega_1$.
We may restrict to the case that $\omega_1=1$ so that $\tau=\omega_2$.
Then~\eqref{criterion2} simplifies to
\[
\im\!\left(\frac{2\pi i}{e_j+\eta_1} - \tau\right)\geq 0.
\]

As mentioned, a different notation for elliptic functions is used in \cite{A,Weierstrass,WW}.
There the periods are denoted by $2\omega_j$ and the definition of $\eta_1$ also 
differs by a factor~$2$.
Thus in that terminology~\eqref{criterion2} takes the form
\[
\im\!\left(\frac{\pi i}{2(e_j\omega_1^2+\eta_1\omega_1)} - \tau\right)\geq 0.
\]

Figure~\ref{fig1} shows (in gray) the regions in the $\tau$-plane where Green's function
has $5$ critical points; that is, the set of $\tau$-values where~\eqref{criterion2}
fails for all~$j$. The range shown is $|\re \tau|\leq 1$ and $0.15\leq \im \tau\leq 2.15$.
\begin{figure}[htb]
\begin{center}
\begin{overpic}[width=0.70\textwidth]{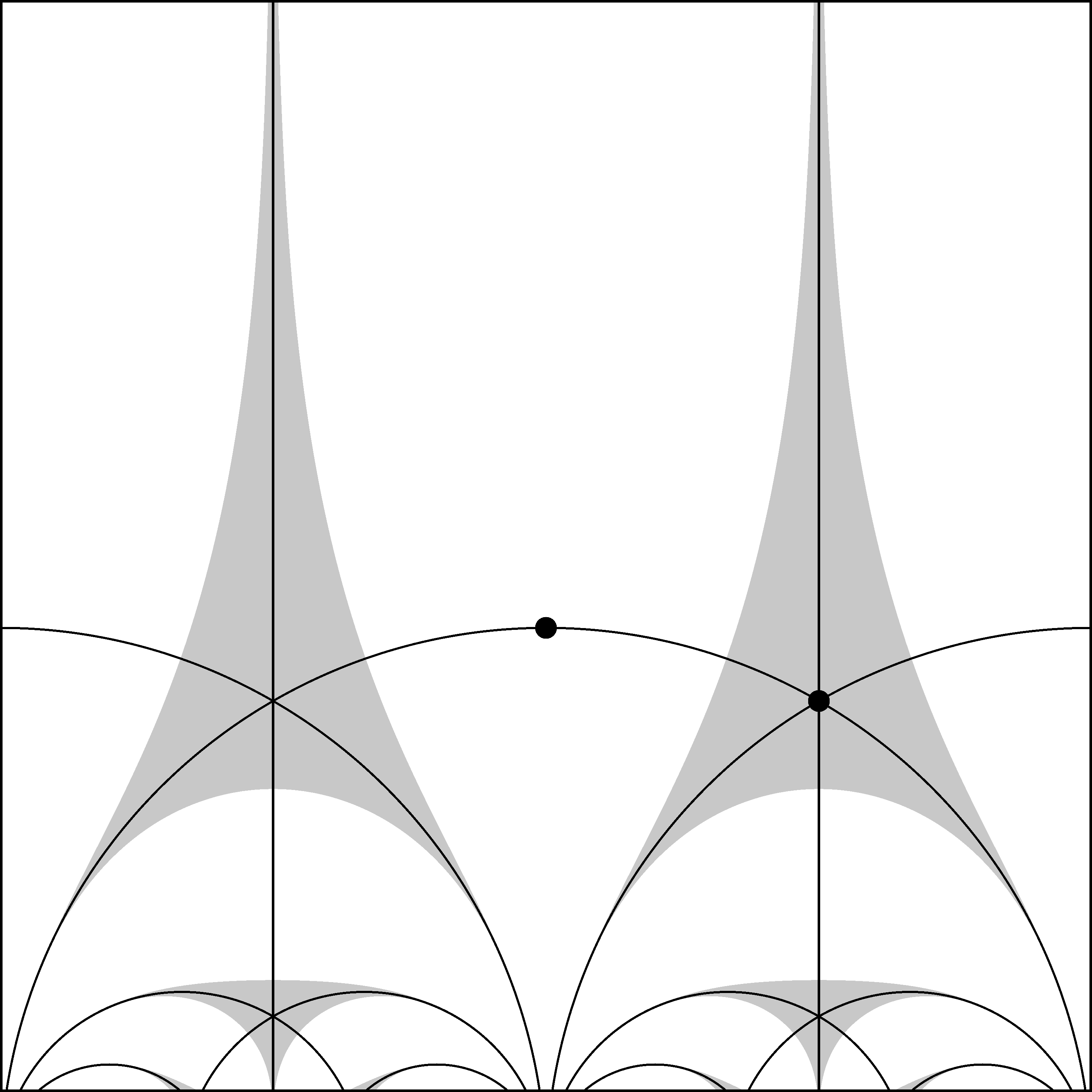}
 \put (25,0) {\line(0,-1){3}}
 \put (50,0) {\line(0,-1){3}}
 \put (75,0) {\line(0,-1){3}}
 \put (20,-8) {$-0.5$}
 \put (50,-8) {$0$}
 \put (72,-8) {$0.5$}
 \put (0,17.5) {\line(-1,0){3}}
 \put (0,42.5) {\line(-1,0){3}}
 \put (0,67.5) {\line(-1,0){3}}
 \put (0,92.5) {\line(-1,0){3}}
 \put (-10,16.5) {$0.5$}
 \put (-10,41.5) {$1.0$}
 \put (-10,66.5) {$1.5$}
 \put (-10,91.5) {$2.0$}
 \put (51,44.5) {$i$}
 \put (79,34.0) {$e^{i\pi/3}$}
\end{overpic}
\vspace*{0.7cm}
\caption{The regions given by~\eqref{criterion2} in the $\tau$-plane.}
\label{fig1}
\end{center}
\end{figure}

The standard fundamental domain consisting of those $\tau$ which satisfy
the inequalities $\im \tau>0$, $-1/2<\re \tau\leq 1/2$ and
$|\tau|\geq  1$, with $|\tau|=1$ only if $\re \tau\geq 0$, is in the upper middle of the picture.
Its images under the modular group are also shown.

Our proof is based on Fatou's theorem from complex
dynamics. Originally Fatou's theorem was proved to estimate from above
the number of attracting cycles of a rational function. Then it was extended
to more general classes of functions. The most surprising fact is that Fatou's
theorem can be used sometimes to estimate the number of solutions of equations in
settings where dynamics is not present. This was first noticed in
\cite{E}; the contents of this unpublished preprint is reproduced in
\cite{H,BEL}. The paper \cite{BE} shows that Fatou's theorem can be used
to prove under some circumstances the existence of critical
points of a meromorphic function.
In the papers \cite{KS,KN1,KN2,KL} Fatou's theorem was used
to obtain upper estimates of the numbers of solutions of equations
of the form 
$$z=r(\overline{z}),$$
with a meromorphic function $r$; this permitted to prove a 
conjecture in astronomy \cite{KN1,KN2}.
In the recent work \cite{M}, a topological classification of quadrature domains
is obtained with a method based on Fatou's theorem.

As mentioned, Fatou \cite[\S 30]{Fatou1920a} stated his result originally only for 
rational functions. As pointed out for example in~\cite[Theorem~7]{B}, the proof extends
to functions meromorphic in~$\C$.

To state the version of  Fatou's theorem that we need, we recall some
definitions.
Let $S$ and $T$ be Riemann surfaces and let $f\colon S\to T$ be holomorphic.
A point $c\in S$ is called a {\em critical} if $f'(c)=0$;
this condition does not depend on the local coordinates.

We say that a curve $\gamma\colon [0,1)\to S$ {\em escapes} if for every compact subset
$K$ in $S$ there exists $t_0\in(0,1)$ such that $\gamma(t)\not\in K$ for
$t\in[t_0,1)$. A curve $\gamma\colon [0,1)\to S$ is called an {\em asymptotic curve}
of $f$ if $\gamma$ escapes and the limit $\lim_{t\to 1} f(\gamma(t))$ exists and
is contained in~$T$. This limit is called an {\em asymptotic value} of $f$.
The following result is due to Hurwitz~\cite{H0}.

\begin{theoremh} 
If a holomorphic map between two Riemann surfaces
has no critical points and no asymptotic curves, then it is a covering.
\end{theoremh}

This result can be deduced from the fact that covering maps are characterized by the path-lifting property:
for every curve $\gamma\colon [0,1]\to T$ and every $z_0\in f^{-1}(\gamma(0))$
there exists a unique curve $\gamma^*\colon [0,1]\to S$ such that
$\gamma=f\circ\gamma^*$; see, for example, \cite[Theorem 9.1]{CI}, where the term
``complete covering'' is also used.
Note also that critical points and asymptotic curves correspond to singularities of
the inverse function; see~\cite[\S XI.1]{NevanlinnaEAF}.

A Riemann surface $S$ is called {\em hyperbolic} if its universal covering
is the unit disk \cite{CI}. A hyperbolic Riemann surface is equipped with the
hyperbolic metric. We denote by $\lambda_S(z)|dz|$ 
the length element of the hyperbolic metric in $S$.
The invariant form of the Schwarz lemma (see, for example,
\cite[Proposition I.2.8 (a)]{Thurston}, \cite[Theorem~2.11]{Milnor2} or \cite[Theorem 10.5]{BM})
 says that a holomorphic map $f\colon S\to T$ between
hyperbolic Riemann surfaces $S$ and $T$ satisfies
$\lambda_T(f(z))|f'(z)| \leq \lambda_S(z)$, with strict inequality
for all $z$, unless $f$ is a covering, in which case the equality
holds for all $z$.

For a holomorphic map $f\colon S\to S$
a point $z_0\in S$ is called {\em fixed}
if $f(z_0)=z_0$, and for such a point $f'(z_0)$ is called the {\em multiplier}.
It is easy to verify that the multiplier does not depend on
the local coordinate. A fixed point is called
{\em attracting}, \emph{neutral} or \emph{repelling}
depending on whether the modulus of its multiplier is less than,
equal to or greater than~$1$,
respectively.

\begin{theoremf}
Let $S$ be a hyperbolic Riemann surface and let $f\colon S\to S$
be a holomorphic map with an attracting fixed point
$z_0\in S$. Then f has a critical point or an asymptotic
curve in~$S$. Moreover, $f^n\to z_0$ locally uniformly in~$S$.
\end{theoremf}

\begin{proof} If $f$ has no critical points and no asymptotic curves,
then $f$ is a covering. 
The Schwarz lemma yields $\lambda_S(z_0)=\lambda_S(f(z_0))|f'(z_0)|=
\lambda_S(z_0)|f'(z_0)|$ and thus $|f'(z_0)|=1$, 
contradicting the assumption that
$|f'(z_0)|<1$. The second statement also follows from the Schwarz lemma.
\end{proof}

In applications to holomorphic dynamics, $S$ is the
immediate attraction basin of an attracting fixed
point $z_0$; that is, the component of the Fatou set which
contains $z_0$. It then follows that
the immediate attracting basin contains a critical point
or an asymptotic curve.


\section{Proof of the Theorem}

To prove the theorem we rewrite~(\ref{1}) as a fixed point equation
\begin{equation}\label{2}
z=-\frac{1}{b}\left(\overline{\zeta(z)}+\overline{az}\right)=:g(z),
\end{equation}
so $g$ is an {\em anti}-meromorphic function in the plane.
As the left hand side of~\eqref{1} is $\Lambda$-periodic, we conclude from~\eqref{2} that
\begin{equation}\label{3}
g(z+\omega)=g(z)+\omega,\quad\omega\in\Lambda.
\end{equation}

The function $g$ does not map the plane into itself because it has poles,
so the equation~(\ref{3}) does not permit to define a map of the torus $T$ into itself.
To remedy this, we consider the set $P_0$ of poles of~$g$.
For $n\geq 1$ we define inductively $P_n=g^{-1}(P_{n-1})$. Then
all iterates of $g$ are defined on the set $\C\backslash P_\infty$,
where
$P_\infty=\bigcup_{n=0}^\infty P_n$. Let $J$ be the closure of $P_\infty$.
Then the iterates of $g$ form a normal family in $\C\backslash J$, and this
set is completely invariant under $g$.
Thus -- with an obvious extension
of these concepts from holomorphic functions
to anti-holomorphic ones -- we call $J$ the \emph{Julia set} and
its complement $F=\C\backslash J$ the \emph{Fatou set} of~$g$.
The Fatou set is thus the maximal open subset of the plane such that
$g(F)\subset F$.
Evidently, $F$ is $\Lambda$-invariant,
so the map $g\colon F\to F$ descends to a map which is defined on an open
subset of the torus $T$ and maps this open subset to itself.

Let $\pi\colon \C\to\C/\Lambda=T$ be the projection map,
$\F=\pi(F)$, and $\g\colon \F\to \F$ the induced map which satisfies
$\g\circ\pi=\pi\circ g.$ The Riemann surface $\F$ is a subset
of the torus, and it is hyperbolic because its complement is infinite.

We apply the terminology
attracting, neutral and repelling also to fixed points $z_0$ of
anti-holomorphic maps $f$,
considering $\overline{\partial}f(z_0)$ as the multiplier.
Notice that
\begin{equation}\label{dg}
\overline{\overline{\partial} g(z)}=-\frac{1}{b}\left(\zeta'(z)+a\right)
=\frac{1}{b}(\wp(z)-a).
\end{equation}
To obtain holomorphic dynamics instead of the anti-holomorphic one, we consider the
second iterates $h=g^2$ and $\h=\g^2$. Then we have
\begin{equation}\label{semi}
\h\circ\pi=\pi\circ h.
\end{equation}
Images of fixed points of $g$ and $h$ in $F$ under $\pi$ are fixed points of
$\g$ and $\h$, respectively.
By the chain rule, we have
\begin{equation}\label{dg2}
h'=((\overline{\partial}g)\circ g)\cdot\partial \overline{g}
=((\overline{\partial}g)\circ g)\cdot\overline{\overline{\partial} g}.
\end{equation}
For a fixed point $z_0$ of $g$ we thus obtain
$h'(z_0)=|\overline{\partial}g(z_0)|^2$.
Even though we will not need this fact, we observe that the multiplier
of $h$ at a fixed point of $g$
is always a non-negative real number.

In order to apply Fatou's theorem to $\h\colon \F\to \F$,
we have to consider the critical points and asymptotic curves
of~$\h$.
\begin{lemma}
The map $\h\colon \F\to \F$ has no asymptotic curves.
\end{lemma}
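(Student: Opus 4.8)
The plan is to transfer the problem to the plane, where $g$ has an exceptionally simple global shape. Since $g(z+\omega)=g(z)+\omega$ for $\omega\in\Lambda$, the function $\phi:=g-\mathrm{id}$ is $\Lambda$-periodic, and by~\eqref{2} it is anti-meromorphic with poles only on $\Lambda$. Thus $\phi$ is bounded, say by a constant $C$, outside the union $U_\varepsilon=\bigcup_{\omega\in\Lambda}B(\omega,\varepsilon)$ of small disjoint disks about the lattice points, and $g(z)=z+\phi(z)$ is a bounded perturbation of the identity away from its poles. Suppose, for contradiction, that $\gamma\colon[0,1)\to\F$ is an asymptotic curve of $\h$, so that $\gamma$ escapes and $\h(\gamma(t))\to w_0\in\F$. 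I lift $\gamma$ through $\pi$ to a curve $\tilde\gamma$ in $F\subset\C$. Since $w_0$ is an interior point of $\F$ and the cosets of $\Lambda$ are discrete, the continuous curve $h(\tilde\gamma(t))$ eventually stays in one small disk about a lift of $w_0$, so it converges to a single finite point $W_0\in F$.

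The technical heart is the following boundedness principle, which rests only on the discreteness of $\Lambda$ and on the shape $g=\mathrm{id}+\phi$: if $\Gamma\colon[0,1)\to\C\setminus\Lambda$ is continuous and $g\circ\Gamma$ is bounded, then $\Gamma$ is bounded. Indeed, if $\Gamma(t)\to\infty$, then at the parameters where $\Gamma$ lies outside $U_\varepsilon$ one has $|g(\Gamma)|\geq|\Gamma|-C\to\infty$, whereas to reach a disk $B(\omega,\varepsilon)$ with $|\omega|$ large the continuous curve must first cross the complement of $U_\varepsilon$ at a comparably large modulus; in either case $g\circ\Gamma$ is unbounded, a contradiction.

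With this principle I peel off the two factors of $h=g^{2}$ in turn. Applying it to the intermediate curve $c:=g\circ\tilde\gamma$, which avoids $\Lambda$ and satisfies $g\circ c=h\circ\tilde\gamma\to W_0$, shows that $c$ is bounded; since $g\to\infty$ at each of its poles, $c$ in fact stays in a compact subset of $\C\setminus\Lambda$. Applying the principle once more to $\tilde\gamma$, for which now $g\circ\tilde\gamma=c$ is bounded, shows that $\tilde\gamma$ is bounded. Hence $\tilde\gamma$ is a bounded escaping curve, and its accumulation set $A$ is a compact subset of the Julia set $J$ lying at finite distance.

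To finish I invoke the complete invariance of $J$. If $A$ met a pole of $h$, then some subsequence of $h(\tilde\gamma(t))$ would tend to $\infty$, contradicting $h(\tilde\gamma(t))\to W_0$; therefore $h$ is holomorphic in a neighbourhood of $A$, and uniform continuity gives $W_0\in h(A)\subset h(J)\subset J$. This contradicts $W_0\in F$, which completes the proof. The one genuinely delicate point—and the reason the statement is not a triviality—is the essential singularity of $\g$ at the image of a pole, where $\g$ wraps $T$ infinitely often and is the natural candidate to generate asymptotic values; the representation $g=\mathrm{id}+\phi$ together with the discreteness of $\Lambda$ is exactly what prevents any finite limit from forming there.
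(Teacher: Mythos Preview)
Your argument is correct and follows essentially the same route as the paper: lift to the plane, use the relation $g(z+\omega)=g(z)+\omega$ to see that $g$ (and hence $h$) tends to $\infty$ as $z\to\infty$ away from small disks about the lattice, and combine this with the forward invariance of $J$ to reach a contradiction. Your packaging is slightly tidier---you abstract a single ``boundedness principle'' and apply it twice (first to $c=g\circ\tilde\gamma$, then to $\tilde\gamma$) to force $\tilde\gamma$ bounded, whereas the paper proceeds by explicit case analysis on the limit points of the lifted curve---but the ingredients are identical. One small slip: $\phi=g-\mathrm{id}$ is \emph{not} anti-meromorphic (it is the sum of an anti-holomorphic and a holomorphic function); this is irrelevant, since you only use that $\phi$ is $\Lambda$-periodic and continuous off $\Lambda$, hence bounded outside $U_\varepsilon$.
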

\begin{proof}
We prove this by contradiction.
Let $\tgamma$ be an asymptotic curve of $\h$ in~$\F$. Let
$\gamma$ be some lifting of $\tgamma$ in $F$;
that is $\pi\circ\gamma=\tgamma$.
As $\tgamma$ escapes from~$\F$, the curve $\gamma$ escapes from~$F$.
By assumption, we have $\h(\tgamma(t))\to \p$ as $t\to 1,$ for some $\p\in\F$, so
$\h(\pi(\gamma(t)))\to\p$ as $t\to 1$.
In view of (\ref{semi}) this yields $\pi(h(\gamma(t)))\to\p$ as $t\to 1$. 
Because $\pi$ is a covering we conclude that
\begin{equation}
\label{77}
h(\gamma(t))\to p\quad\mbox{for some}\  p\in F,
\end{equation}
with $\pi(p)=\p$. Thus $\gamma$ is an asymptotic curve of $h$ in $F$.

Suppose that $\gamma$ has a limit point $q\in\C$ as $t\to 1$; that is,
there exists a sequence $(t_j)$ tending to $1$ such that $\gamma(t_j)\to q$.
Then $q\in \partial F\subset J$.
If $q$ is neither a pole of $g$ nor a
preimage of such a pole under $g$, then $h$ is holomorphic at $q$,
with $h(q)\in J$ by the complete invariance of the Julia set.
On the other hand, we have $h(q)=\lim_{j\to\infty}h(\gamma(t_j))= p\in F$,
which is a contradiction.
If $q$ is the preimage of a pole of~$g$, then $q$ is a pole of~$h$.
Thus $h(q)=\infty$, contradicting again $h(q)=p$.

This shows that the finite limit points of $\gamma$ are poles of $g$.
As the poles form a discrete subset of $\C$ we actually see that 
if $\gamma$ has a finite limit point $q$, then $\gamma(t)\to q$ 
as $t\to 1$ and thus $\sigma(t):=g(\gamma(t))\to\infty$ as $t\to 1$.
On the other hand, $g(\sigma(t))=h(\gamma(t))\to p$ as $t\to 1$.
Let $E$ be a $\Lambda$-invariant set consisting of disjoint disks around
the poles of $g$.  It follows from~\eqref{3} that 
\[
g(z)\to\infty\quad\mbox{as}\  z\to\infty,\  z\in\C\backslash E.
\]
This is a contradiction to $\sigma(t)\to\infty$ and $g(\sigma(t))\to p$ as $t\to 1$.
We have thus shown that $\gamma$ has no finite limit points, meaning that
$\gamma(t)\to\infty$ as $t\to 1$.

%
In order to show that this is impossible we note again that the 
singularities of $h$ are the poles of $g$ and their preimages under~$g$.
Since these preimages accumulate only at the poles of $g$ (and at $\infty$) there 
exists a $\Lambda$-invariant set $E'\subset\C$ consisting of disjoint disks
which contains all singularities of $h$.
Since $h$ satisfies
\[
h(z+\omega)=h(z)+\omega,\quad\omega\in\Lambda,
\]
by (\ref{3}), it follows 
that
\[
h(z)\to\infty\quad\mbox{as}\  z\to\infty,\  z\in\C\backslash E'.
\]
As before this is incompatible with $\gamma(t)\to\infty$ and $h(\gamma(t))\to p$
as $t\to 1$.
This completes the proof of the lemma.
\end{proof}

We consider the equivalence relation on $\F$ defined by $z\sim z'$ if
$\h^n(z)=\h^m(z')$ for some
non-negative integers $m$ and~$n$. The equivalence classes are
called the {\em grand orbits}.
We note that if the sequence $(\h^n(z))$ converges for some $z\in \F$,
then
for all $z'$ in the grand orbit of $z$ the sequence $(\h^n(z'))$
converges to the same limit.
We call this the limit of the grand orbit.

\begin{lemma}
The set of critical points of $\h$ belongs to at most $4$
grand orbits under~$\h$.
\end{lemma}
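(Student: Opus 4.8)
The plan is to determine the critical points of $\h$ explicitly and then group them into grand orbits. First I would identify the critical points of $g$ itself. Since $\overline{\overline\partial g(z)}=\tfrac{1}{b}(\wp(z)-a)$ by~\eqref{dg}, the critical points of $g$ are exactly the solutions of $\wp(z)=a$. As $\wp$ assumes every value exactly twice (counting multiplicity) in a fundamental domain, the induced map $\g$ has at most two critical points on the torus; call them $\widetilde{c}_1$ and $\widetilde{c}_2$ (when $a\in\{e_1,e_2,e_3\}$ there is only one). Only those lying in $\F$ are relevant for $\h\colon\F\to\F$, but this only decreases the count.

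Next I would use the chain-rule formula~\eqref{dg2} to describe the critical points of $h=g^2$. Since $h'(z)=(\overline\partial g)(g(z))\cdot\overline{\overline\partial g(z)}$, the point $z$ is critical for $h$ precisely when $\wp(z)=a$ or $\wp(g(z))=a$; that is, the critical set of $\h$ is the union of the critical set of $\g$ and its $\g$-preimage. Thus the critical points of $\h$ in $\F$ are the $\widetilde{c}_i$ together with all points of $\g^{-1}(\widetilde{c}_1)\cup\g^{-1}(\widetilde{c}_2)$ that lie in $\F$.

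The key step, which I expect to be the crux of the argument, is the observation that the (possibly infinitely many) points of $\g^{-1}(\widetilde{c}_i)$ all lie in a single grand orbit. Indeed, if $\g(w)=\widetilde{c}_i$, then $\h(w)=\g^2(w)=\g(\widetilde{c}_i)$, so every $\g$-preimage of $\widetilde{c}_i$ is mapped by $\h$ to the one point $\g(\widetilde{c}_i)$ and hence belongs to the grand orbit of $\g(\widetilde{c}_i)$. The real content of the lemma is precisely this collapse: without it the preimage set could appear to spread over arbitrarily many grand orbits. Consequently every critical point of $\h$ lies in the grand orbit of one of the four points $\widetilde{c}_1$, $\widetilde{c}_2$, $\g(\widetilde{c}_1)$, $\g(\widetilde{c}_2)$, which gives at most four grand orbits as claimed.
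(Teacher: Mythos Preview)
Your argument is correct and follows the same approach as the paper: identify the two critical points of $\g$ via $\wp(z)=a$, use the chain rule~\eqref{dg2} to see that the remaining critical points of $\h$ are the $\g$-preimages of these, and observe that each such preimage maps under $\h$ to a single point, collapsing everything into at most four grand orbits. The only cosmetic difference is that the paper labels the two critical points as $c$ and $-c$ (using the oddness of $\wp$), a relation exploited later in the proof of the theorem but not needed for this lemma.
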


\begin{proof}
By~\eqref{dg}, the equation $\overline{\partial}g(z)=0$ is equivalent to
$\wp(z)=a$,
and has two solutions modulo $\Lambda$ which define points
$c$ and $-c$ on $T$. Here and in other similar places
we denote by $-c$ the point which corresponds to $c$ by the conformal
involution of the torus.
If any of the points $c$ and $-c$ is in $\F$, then it is a zero
of $\h'$.
By~\eqref{dg2} the other zeros of $\h'$ are $\g^{-1}(\pm c) =\pm\g^{-1}(c)$.
Even though this is an infinite
set on~$T$, the critical points of $\h'$ are thus contained
in at most $4$ grand orbits represented by
$c$, $-c$, $\g(c)$ and $-\g(c)$.
\end{proof}

\begin{lemma}
Let $z_0$ be a fixed point of~$\g$. If $\h^n(z)\to z_0$
as $n\to\infty$ for some~$z$,
then $\h^n(\g(z))\to z_0$.
\end{lemma}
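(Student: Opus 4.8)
The plan is to exploit the fact that $\h=\g^2$, so that $\g$ commutes with every iterate of $\h$. First I would record the commutation identity: since $\h=\g\circ\g$, for each $n$ we have $\g\circ\h^n=\g^{2n+1}=\h^n\circ\g$ as maps on $\F$. Applying this to the given point $z$ yields the key equality
\[
\h^n(\g(z))=\g(\h^n(z))\qquad\text{for all }n.
\]
Thus the orbit of $\g(z)$ under $\h$ is nothing but the $\g$-image of the orbit of $z$.

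Next I would pass to the limit. The map $\g\colon\F\to\F$ is anti-holomorphic, hence in particular continuous, and by hypothesis $z_0$ is a fixed point of $\g$, so $\g(z_0)=z_0$. Combining the given convergence $\h^n(z)\to z_0$ with continuity of $\g$ at $z_0$ gives
\[
\h^n(\g(z))=\g(\h^n(z))\longrightarrow\g(z_0)=z_0,
\]
which is exactly the assertion of the lemma.

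The only point that needs a word of justification — and it is minor — is that all the maps occurring above are defined at the relevant points: since $z$, $\g(z)$ and each $\h^n(z)$ lie in $\F$ and $\g\colon\F\to\F$, every expression makes sense and the commutation identity is valid on all of $\F$. I therefore do not anticipate any genuine obstacle here; the statement is essentially a formal consequence of the fact that $\g$ commutes with its own square, combined with the continuity of $\g$ at the fixed point $z_0$. If anything, the step most worth stating carefully is the commutation identity $\g\circ\h^n=\h^n\circ\g$, since it is what transfers the known convergence of the $\h$-orbit of $z$ to that of $\g(z)$.
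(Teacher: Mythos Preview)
Your argument is correct and is essentially identical to the paper's proof: both use the commutation identity $\h^n\circ\g=\g\circ\h^n$ (which follows from $\h=\g^2$) together with continuity of $\g$ and the fixed-point condition $\g(z_0)=z_0$. The only difference is that you spell out the continuity step and the well-definedness on $\F$ more explicitly than the paper does.
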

\begin{proof}
We have $\h^n\circ \g=\g\circ \h^n$ for all $n\in\N$.
Since $\h^n(z)\to z_0$ this yields
$\h^n(\g(z_0))=\g(\h^n(z))\to \g(z_0)=z_0$.
\end{proof}

Applying Fatou's theorem to the map $\h\colon \F\to\F$,
we deduce that $\h$ and thus $\g$ has at most two attracting fixed points.
If $\F$ is disconnected, we apply Fatou's theorem separately to each component
that contains an attracting fixed point. As the $\pi$-image
of an attracting fixed point of $g$ is
an attracting fixed point of $\g$ we obtain
\begin{lemma} \label{lemma4}
The function $g$ has at most two attracting
fixed points, modulo~$\Lambda$.
\end{lemma}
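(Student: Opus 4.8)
The plan is to transfer the count to the induced holomorphic map $\h=\g^2$ on the hyperbolic surface $\F$ and to combine Fatou's theorem with Lemmas~1--3. First I would reduce the assertion to a count on the torus. An attracting fixed point of $g$ lies in the Fatou set $F$, so it projects under $\pi$ to a fixed point $z_0$ of $\g$; since $h'(z_0)=|\overline{\partial}g(z_0)|^2<1$, the point $z_0$ is an attracting fixed point of the \emph{holomorphic} map $\h$. It therefore suffices to bound the number of attracting fixed points of $\g$, regarded as attracting fixed points of $\h\colon\F\to\F$.

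Next I would apply Fatou's theorem one immediate basin at a time. Given an attracting fixed point $z_0$ of $\g$, let $U$ be the component of the Fatou set of $\h$ that contains $z_0$. Then $\h(U)$ is connected, lies in the Fatou set, and contains $\h(z_0)=z_0$, so $\h(U)\subset U$; and $U$ is hyperbolic, being a subsurface of $\F$. Fatou's theorem applied to $\h\colon U\to U$ produces a critical point or an asymptotic curve of $\h$ in $U$, and Lemma~1 excludes asymptotic curves, so $U$ contains a critical point. As noted when grand orbits were defined, a whole grand orbit lies in a single basin, and immediate basins of distinct attracting fixed points are disjoint. By Lemma~2 the critical points of $\h$ fill at most four grand orbits, represented by $c$, $-c$, $\g(c)$ and $-\g(c)$, so at this stage I would have the bound four.

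The reduction from four to two is the real point, and I would obtain it from Lemma~3 together with the symmetry of $g$. If the grand orbit of $c$ meets the basin of $z_0$, then Lemma~3 (with $z=c$) places $\g(c)$ in the same basin; running Lemma~3 again, now with $z=\g(c)$ and using $\g(\g(c))=\h(c)$, gives the converse, so the grand orbits of $c$ and of $\g(c)$ always sit in one and the same basin. Since $\zeta$ is odd we have $g(-z)=-g(z)$, hence $\g(-c)=-\g(c)$, and the identical argument ties the grand orbits of $-c$ and $-\g(c)$ together. Thus the critical points of $\h$ occupy at most the two paired families $\{c,\g(c)\}$ and $\{-c,-\g(c)\}$, each confined to a single immediate basin. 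Hence at most two disjoint immediate basins can contain a critical point, so $\g$, and therefore $g$, has at most two attracting fixed points modulo~$\Lambda$.

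I expect the pairing step of the last paragraph to be the main obstacle: one must check that Lemma~3 forces $c$ and $\g(c)$ into a common basin in \emph{both} directions and that the involution $z\mapsto-z$ collapses the four grand orbits to exactly two without overcounting. A minor point to dispose of is that some of $c$, $-c$, $\g(c)$, $-\g(c)$ may fail to lie in $\F$; this only removes critical points and so does not affect the bound.
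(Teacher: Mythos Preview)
Your proof is correct and follows essentially the same route as the paper's: reduce to attracting fixed points of $\g$, apply Fatou's theorem componentwise to $\h$ using Lemma~1 to exclude asymptotic curves, invoke Lemma~2 for the four critical grand orbits, and then use Lemma~3 to pair $\{c,\g(c)\}$ and $\{-c,-\g(c)\}$ into two families. The paper compresses this into a single sentence (``Applying Fatou's theorem to the map $\h\colon \F\to\F$, we deduce that $\h$ and thus $\g$ has at most two attracting fixed points''), so your write-up simply spells out what is left implicit there, including the two-directional use of Lemma~3 and the handling of the case where some of $c,-c,\g(c),-\g(c)$ lie outside~$\F$.
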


The map $\phi\colon T\to\bC$, $z\mapsto z-g(z)$, is well defined by~\eqref{3}.
Let $J_\phi=1-|\overline{\partial} g|^2$ be the Jacobian determinant of $\phi$.
Then $D^+=\{z\in T\colon J_\phi(z)>0\}$ is the set where  $\phi$ preserves the orientation and
$D^-=\{z\in T\colon J_\phi(z)<0\}$ is the set where  $\phi$ reverses the orientation.
As $\phi$ has one pole, 
a point $w$ of large modulus has one preimage on $T$ and the map is reversing
orientation at this preimage.
We conclude that the degree of $\phi$ equals $-1$; see~\cite[\S 5]{Milnor} for the definition 
of the degree.

Suppose first that all zeros of $\phi$ are in $D^+\cup D^-$.
Equivalently, $g$ has no neutral fixed points.
Denote by $N^+$ and $N^-$ the numbers of zeros of $\phi$  in
$D^+$ and $D^-$ respectively. Then $N^+-N^-$ equals the degree of $\phi$ so that
$N^+-N^-=-1$ and thus 
\begin{equation}\label{count}
N^-=N^++1.
\end{equation}
For the number $N=N^++N^-$ of fixed points of $g$ in $T$ we thus find that 
\begin{equation}\label{count2}
N=2N^++1.
\end{equation}

Since $J_\phi=1-|\overline{\partial} g|^2$, the zeros of $\phi$ in $D^+$ are attracting 
fixed points of $g$ while the zeros of $\phi$ in $D^-$ are repelling fixed points of~$g$.
Thus Lemma~\ref{lemma4} yields that
\begin{equation}\label{count3}
N^+\leq 2.
\end{equation}
It follows from~\eqref{count2} and~\eqref{count3} that  $N\leq 5$.

On the other hand, since $\phi$ is odd and $\Lambda$-periodic it easily follows that 
that the half-periods $\omega_1/2$, $\omega_2/2$ and $\omega_3/2=(\omega_1+\omega_2)/2$ are 
zeros of~$\phi$. Equivalently, they are fixed points of~$g$. Thus we have $N\geq 3$.
Altogether, since $N$ is odd by~\eqref{count2}, it follows that $N=3$ or $N=5$.

It remains to determine the criterion distinguishing the cases.
Suppose first that all three half-periods are in $D^-$; that is,
they are repelling fixed points of~$g$. Then $N^-\geq 3$.
This yields that $N^-=3$ and $N^+=2$ so that $N=5$ by~\eqref{count} and~\eqref{count3}.
Suppose now that one half-period, say $\omega_j/2$,
is not in $D^-$ and thus 
in $D^+$. Thus $\omega_j/2$ is an attracting fixed point of $g$ and
hence attracts a critical orbit by Fatou's theorem.
However, since $g$ is odd we see that $\omega_j/2$ in fact attracts
both critical orbits. Thus, by Fatou's theorem, there are no other 
attracting 
fixed points. Thus $N^+=1$ and hence $N=3$ in this case.

We see that $N=3$ if and only if there exists $j\in\{1,2,3\}$ such that
$\omega_j/2$ is an attracting fixed point of $g$; that is,
$|\overline{\partial} g(\omega_j/2)|< 1$.
Using~\eqref{dg} this takes the form 
\begin{equation}\label{e_jab}
\min_{1\leq j\leq 3} \left|\frac{e_j}{b}-\frac{a}{b}\right|< 1.
\end{equation}
Now~\eqref{ab} yields 
\[
\frac{a}{b}
=-\frac{\overline{\omega_1}}{\omega_1}-\frac{\eta_1}{b\omega_1}
=-\frac{\overline{\omega_1}}{\omega_1}+\frac{\eta_1|\omega_1|^2\im\tau}{\omega_1\pi}
=\frac{\overline{\omega_1}}{\omega_1}\left(-1+\frac{\eta_1\omega_1\im\tau}{\pi}\right)
\]
and
\[
\frac{e_j}{b}
=-\frac{e_j|\omega_1|^2\im\tau}{\pi}
=-\frac{\overline{\omega_1}}{\omega_1}
\frac{e_j\omega_1^2\im\tau}{\pi}.
\]
Substituting the last two equations in~\eqref{e_jab} yields
\[
\left|\frac{e_j\omega_1^2+\eta_1\omega_1}{\pi}\im\tau-1\right|< 1.
\]
This is equivalent to strict inequality in~\eqref{criterion2}.
This completes the proof of the theorem in the case that all zeros of $\phi$ are 
in $D^+\cup D^-$. 

To deal with the case where this condition is not satisfied, we note that 
in the above arguments we may replace $\phi(z)$ by $\psi(z)=\phi(z)-w$
for any $w\in\C$. Noting that 
$\phi$ and $\psi$ have the same Jacobian determinant
we conclude that 
whenever all $w$-points of $\phi$ are in $D^+\cup D^-$,
then $\phi$ has either three or five $w$-points in~$T$.
Moreover, there are three $w$-points if and only if one them is in~$D^+$.

We will use the following result~\cite[Proposition~3]{BE2010}.

\begin{lemma} \label{lemma5}
Let  $D\subset \C$ be a domain and let $f\colon D\to\C$ be a harmonic map.
Suppose that there exists $m\in\N$ such that every $w\in\C$ has at
most $m$ preimages.
Then the set of points which have $m$ preimages is open.
\end{lemma}

Suppose now that $\phi$ has a zero in $T\backslash (D^+\cup D^-)$.
Then there are arbitrarily small $w$ such that $\phi$ has a $w$-point in~$D^+$.
Since for such $w$ the function $\phi$ has three $w$-points, it follows
from Lemma~\ref{lemma5} that $\phi$ has three zeros in~$T$.
These zeros are the half-periods and we see that we have equality in~\eqref{criterion2}.
This completes the proof of the theorem.

\begin{remark} 
The quantities $e_j\omega_1^2+\eta_1\omega_1$ occurring in~\eqref{criterion2} have the 
following representations
via theta functions, see~\cite[p.~44]{Weierstrass}:
\[
e_1\omega_1^2+\eta_1\omega_1
=- \frac{\vartheta_2''(0)}{\vartheta_2(0)}
, \quad 
e_2\omega_1^2+\eta_1\omega_1
=- \frac{\vartheta_0''(0)}{\vartheta_0(0)}
, \quad 
e_3\omega_1^2+\eta_1\omega_1
=- \frac{\vartheta_3''(0)}{\vartheta_3(0)}.
\]
The series for theta functions converge fast and provide a convenient way to compute Figure~\ref{fig1}.

Let 
\[
F_j(\tau)=
\frac{2\pi i}{e_j\omega_1^2+\eta_1\omega_1} - \tau
\]
so that~\eqref{criterion2} takes the form $\im F_j(\tau)\geq 0$.
It seems that $F_j$ maps the components of the set of all $\tau$ in the upper half-plane
where $\im F_j(\tau)> 0$
univalently onto the upper half-plane, but we have not been able to prove this.
\end{remark}

\begin{remark} 
Lin and Wang~\cite[Theorems 1.6 and 1.7]{Lin} pay special attention to the case 
that $\tau=1/2+ib$, in which case Green's function has five critical 
points if and only if $b$ is outside a certain interval $[b_0,b_1]$ where
$b_0\approx 0.35$ and $b_1\approx 0.71$.

Here we note that the constants $b_0$ and $b_1$ are related to the 
so-called one-ninth constant~$\Lambda$ occurring in approximation theory;
see~\cite[Section 4.5]{Finch} for the definition and properties of this constant.
As noted there,
this constant was already computed by Halphen~\cite[p.~287]{Halphen} to six digits.
It turns out that $\Lambda=e^{-2\pi b_0}=e^{-\pi/(2 b_1)}$. The numerical values are
$\Lambda=0.10765391\dots$, $b_0=0.35472989\dots$ and $b_1=0.70476158\dots$.

To prove the above relation between these constants we note first that
$e_1\omega_1^2+\eta_1\omega_1=0$ if and only if $\vartheta_2''(0)=0$
and thus if
\[
\sum_{k=0}^\infty (2k+1)^2 h^{k(k+1)}= 0 \quad \text{where}\   h=e^{i\pi\tau}.
\]
For $\tau=1/2+ib$ with $b\in\R$ we have $h=e^{i\pi/2-\pi b}$ and hence
\[
h^{k(k+1)}=\exp\!\left( (i\pi-2\pi b)\frac{k(k+1)}{2}\right) =(-x)^{k(k+1)/2}
\quad \text{with} \ x=e^{-2\pi b}.
\]
For $\tau=1/2+ib$ the condition that  $e_1\omega_1^2+\eta_1\omega_1=0$
is thus equivalent to
\[
\sum_{k=0}^\infty (2k+1)^2 (-x)^{k(k+1)/2}=0.
\]
The smallest positive solution of the last equation is the
one-ninth constant~$\Lambda$; see~\cite[Section 4.5]{Finch}.
Since $x=e^{-2\pi b}$ we deduce that $\Lambda=e^{-2\pi b_0}$.

The modular group leaves the sets where Green's function has five or three
critical points invariant. The transformation $T(z)=(z-1)/(2z-1)$
satisfies $T(1/2+ib)=1/2+i/(4b)$.
This implies that $b_1=1/(4b_0)$ and $\Lambda=e^{-\pi/(2 b_1)}$.
\end{remark}
\begin{remark}
Figure~\ref{fig2} shows the Julia sets of the functions  $g$ corresponding to the square and the
hexagonal lattice. 
\begin{figure}[htb]
\begin{center}
\begin{overpic}[width=0.49\textwidth]{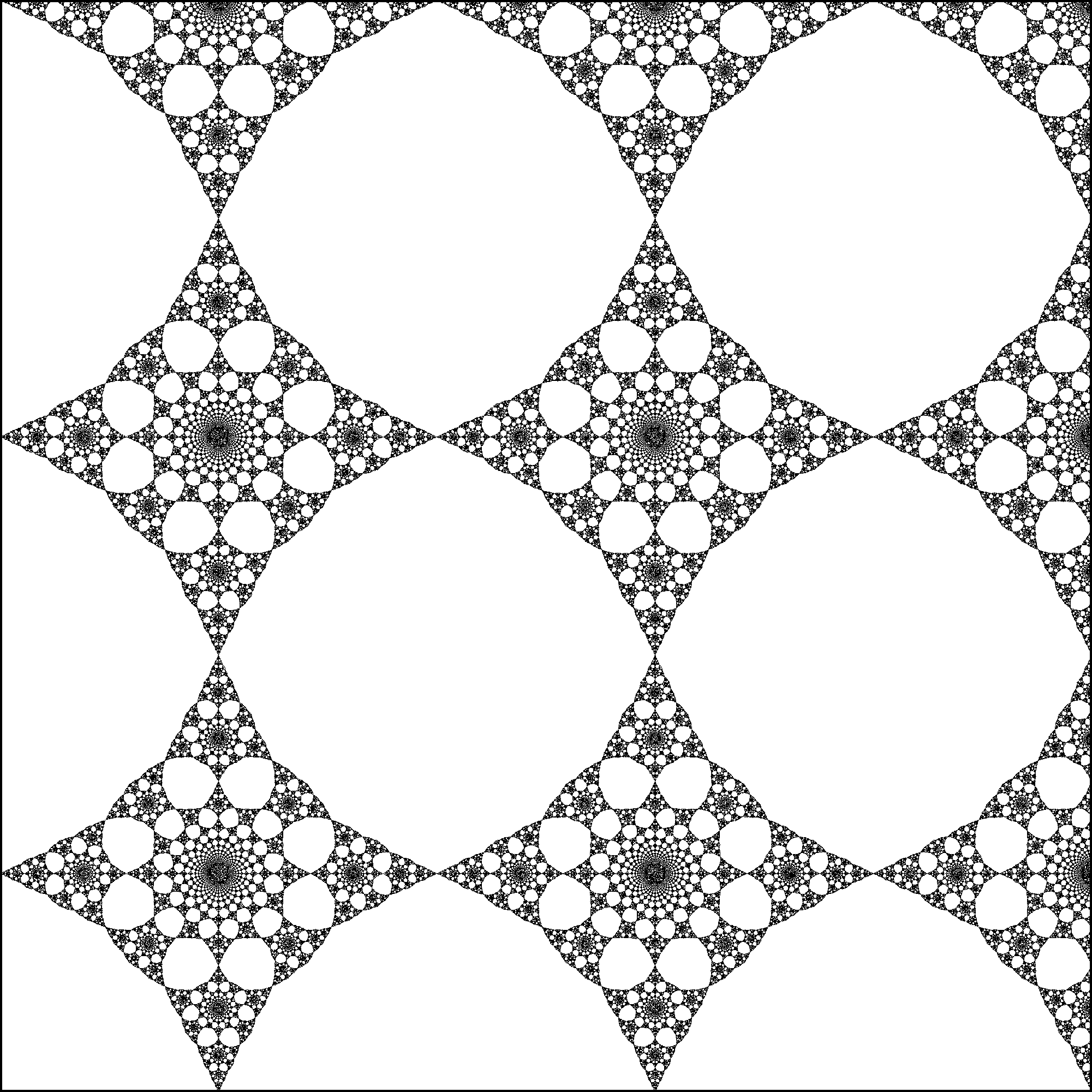}
\linethickness{0.5mm}
 \put (20,20) {\line(1,0){40}}
 \put (20,60) {\line(1,0){40}}
 \put (20,20) {\line(0,1){40}}
 \put (60,20) {\line(0,1){40}}
 \put(40,20){\circle*{2}}
 \put(20,40){\circle*{2}}
 \put(40,40){\circle*{2}}
 \put(40,20){\textcolor{white}{\circle*{1.3}}}
 \put(20,40){\textcolor{white}{\circle*{1.3}}}
 \put(40,40){\textcolor{white}{\circle*{1.3}}}
 \put(22,40){{\small $\frac{\omega_2}{2}$}}
 \put(42,40){{\small $\frac{\omega_1\!+\!\omega_2}{2}$}}
 \put(38,24){{\small $\frac{\omega_1}{2}$}}
\end{overpic}
\begin{overpic}[width=0.49\textwidth]{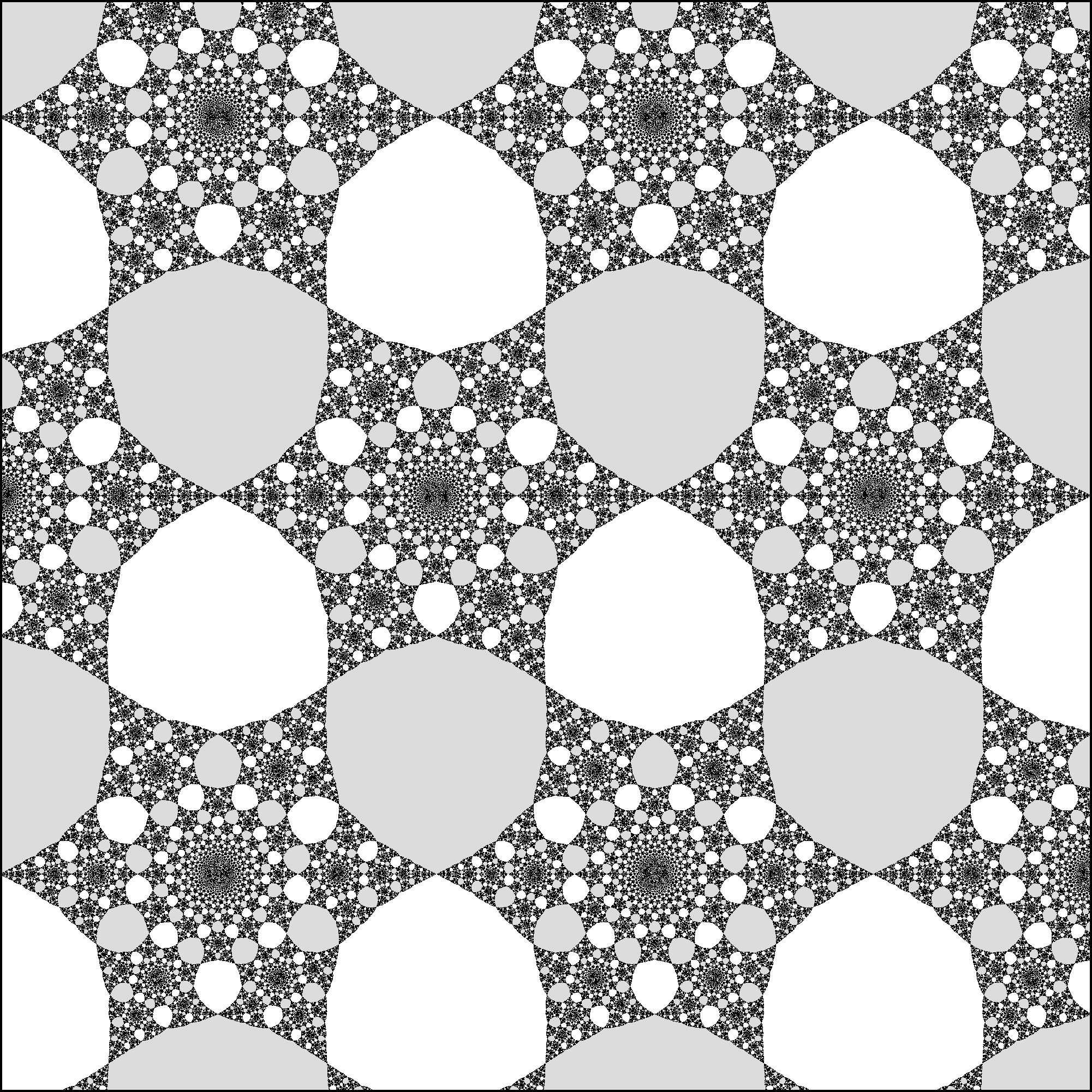}
\linethickness{0.5mm}
 \put (20,20) {\line(1,0){40}}
 \put (40,54.64) {\line(1,0){40}}
 \put (20,20) {\line(1,1.732){20}}
 \put (60,20) {\line(1,1.732){20}}
 \put(40,20){\circle*{2}}
 \put(30,37.32){\circle*{2}}
 \put(50,37.32){\circle*{2}}
\linethickness{0.2mm}
 \put(40,31.547){\line(1,1){0.7}}
 \put(40,31.547){\line(1,-1){0.7}}
 \put(40,31.547){\line(-1,1){0.7}}
 \put(40,31.547){\line(-1,-1){0.7}}
 \put(60,43.094){\line(1,1){0.7}}
 \put(60,43.094){\line(1,-1){0.7}}
 \put(60,43.094){\line(-1,1){0.7}}
 \put(60,43.094){\line(-1,-1){0.7}}
 \put(40,20){\textcolor{white}{\circle*{1.3}}}
 \put(30,37.32){\textcolor{white}{\circle*{1.3}}}
 \put(50,37.32){\textcolor{white}{\circle*{1.3}}}
 \put(31,34){{\small $\frac{\omega_2}{2}$}}
 \put(50,37){{\small $\frac{\omega_1\!+\!\omega_2}{2}$}}
 \put(37.5,23.5){{\small $\frac{\omega_1}{2}$}}
\end{overpic}
\caption{Julia sets corresponding to $\tau=i$ and $\tau=e^{i\pi/3}$.}
\label{fig2}
\end{center}
\end{figure}
The standard fundamental domains are marked by thick lines and the
half-periods are marked by circles. For the square lattice, the
half-periods are the only
fixed points of~$g$, with $(\omega_1+\omega_2)/2$ attracting and the
two other ones repelling.
For the hexagonal lattice, all half-periods are repelling and there
are the attracting fixed points $(\omega_1+\omega_2)/3$ and $2(\omega_1+\omega_2)/3$
marked by crosses.

Figure~\ref{fig3} shows the Julia set of the function corresponding 
 to $\tau=1/2+ib_1$. The half-period $\omega_1/2$ is a parabolic fixed point with two petals.
The two other half-periods are repelling fixed points.
\begin{figure}[htb]
\begin{center}
\begin{overpic}[width=0.98\textwidth]{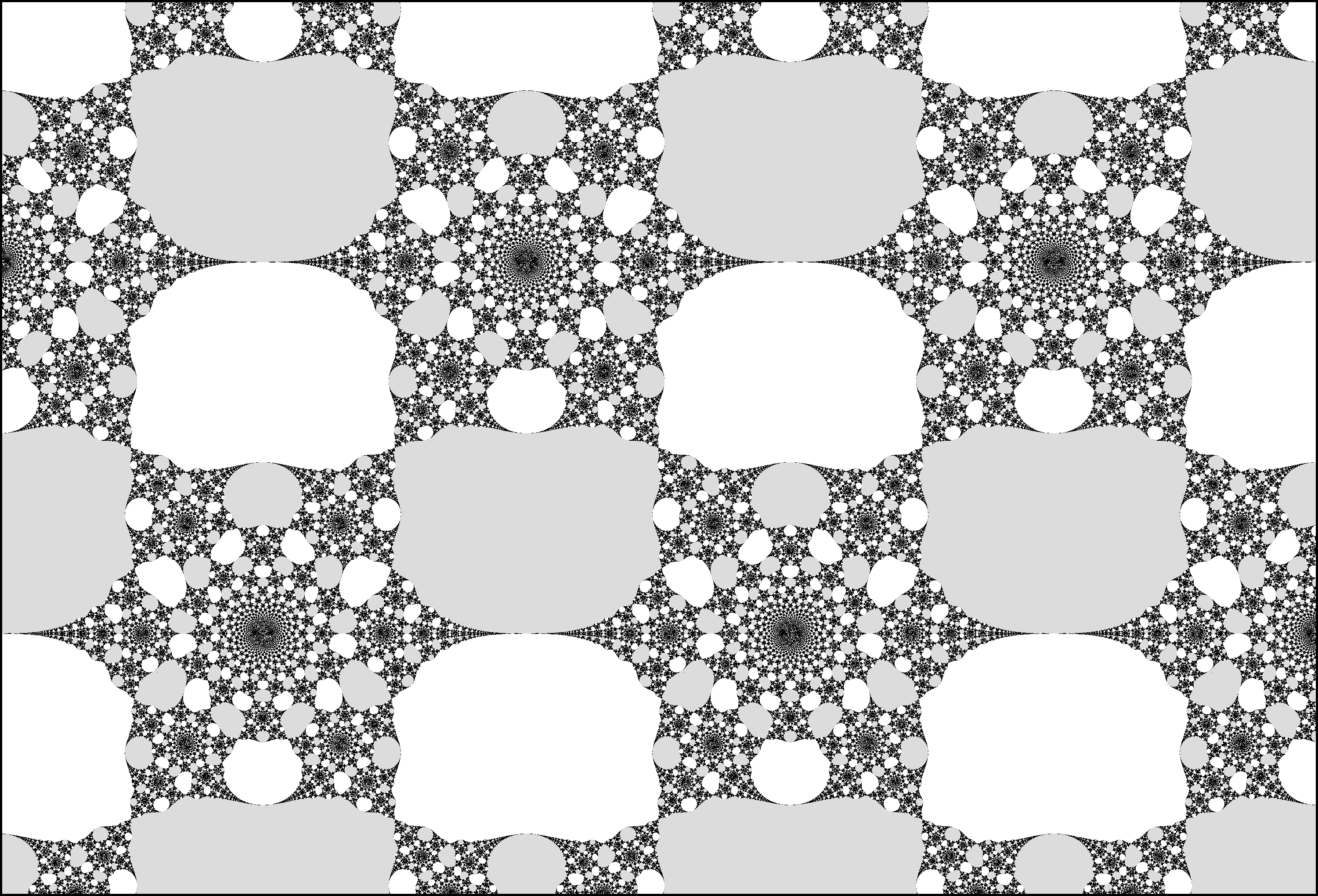}
\linethickness{0.5mm}
 \put (20,20) {\line(1,0){40}}
 \put (40,48.02) {\line(1,0){40}}
 \put (20,20) {\line(1,1.409){20}}
 \put (60,20) {\line(1,1.409){20}}
 \put(40,20){\circle*{1}}
 \put(30,34.09){\circle*{1}}
 \put(50,34.09){\circle*{1}}
 \put(40,20){\textcolor{white}{\circle*{0.65}}}
 \put(30,34.09){\textcolor{white}{\circle*{0.65}}}
 \put(50,34.09){\textcolor{white}{\circle*{0.65}}}
 \put(30.5,31.5){{\small $\frac{\omega_2}{2}$}}
 \put(50.5,34.5){{\small $\frac{\omega_1\!+\!\omega_2}{2}$}}
 \put(37.5,22.0){{\small $\frac{\omega_1}{2}$}}
\end{overpic}
\caption{Julia set corresponding to $\tau=1/2+ib_1$.}
\label{fig3}
\end{center}
\end{figure}
\end{remark}

We thank C.-S. Lin for his useful comments on this paper.

{\em Mathematisches Seminar

Christian-Albrechts-Universiat zu Kiel

Ludewig-Meyn-Str. 4

24098 Kiel

Germany
\vspace{.1in}

Purdue University

West Lafayette, IN 47907

USA}
\end{document}